\newtheorem{thm}{Theorem}[section]
\newtheorem{df}{Definition}[section]
\newtheorem{remark}{Remark}[section]
\newtheorem{exmp}{Example}[section]
\begin{document}
\begin{center}
{\huge Certain properties of bounded variation of sequences of fuzzy numbers by using generalized weighted mean}\\\vspace{.4cm}
Sarita Ojha* and P. D. Srivastava\\
Department of Mathematics, Indian Institute of Technology Kharagpur,\\ Kharagpur-721302, India\\
Email : sarita.ojha89@gmail.com
\end{center}

\section*{Abstract}
The class of bounded variation $bv^F(u,v)$ of fuzzy numbers introduced by \cite{SO1} has been investigated further with the help of the generalized weighted mean
matrix $G(u,v)$. Imposing some restrictions on the matrix $G(u,v)$, we have established it's relation with different class of sequences such as our known classical 
sets, set of all statistically null difference sequences, Cesaro sequences etc. Also we have examined the concepts like equivalent fuzzy number, symmetric fuzzy 
number on this quasilinear space.\\\\
{\bf Keywords:} Sequence of fuzzy number, Bounded variation, Generalized weighted mean, Equivalent fuzzy number.\\
{\bf AMS subject classification:} 46S40, 03E72.

\section{Introduction and preliminaries}
Since the concept of fuzzy numbers, introduced by Zadeh (1965), several mathematicians have studied extensively from different aspects of
its theory and applications such as fuzzy analysis, fuzzy topology, fuzzy measure, fuzzy decision making etc. Motivated by this, many
authors \cite{B}, \cite{S}, \cite{SO} have introduced new class of sequences of fuzzy numbers and investigated convergence and other topological properties.\\
Since the set of all fuzzy numbers can be embedded in $\mathbb{R}$, one may think that the results proved in reals is a particular case of fuzzy
numbers. But as every fuzzy number does not have an inverse element with respect to addition i.e. does not form a group structure, so most of the facts
known for reals may not be valid in fuzzy setting. Therefore this theory is not a mere extension of the results hold in $\mathbb{R}$.\\
A fuzzy real number $X:R \to [0,1]$ is a fuzzy set which is is normal, fuzzy convex, upper semi-continuous and the support
$X^0=\overline{\{t\in R:X(t)>0\}}$ is compact.
Clearly, $R$ is embedded in $L(R)$, the set of all fuzzy numbers, in this way: for each $r\in R, \ \overline{r}\in L(R)$ is defined as,
\begin{center}
$\overline{r}(t) = \left\{
\begin{array}{c l}
  1, & t=r \\
  0, & t\neq r
\end{array}
\right.$
\end{center}
For, $0<\alpha\leq1$, $\alpha$-cut of a fuzzy number $X$ is defined by $X^{(\alpha)}=\{t\in R:X(t)\geq\alpha\}$. The set $X^{(\alpha)}$ is a closed, bounded and non-empty interval for each $\alpha\in [0,1]$. For any two fuzzy numbers $X,Y$, Matloka \cite{M} proved that $L(R)$ is a complete under the following metric,
\begin{eqnarray*}
d(X,Y)=\sup\limits_{0\leq \alpha \leq1} \max\{|\underline{X}^{(\alpha)}-\underline{Y}^{(\alpha)}|,|\overline{X}^{(\alpha)}-\overline{Y}^{(\alpha)}|\}
\end{eqnarray*}
where $\underline{X}^{(\alpha)}$ and $\overline{X}^{(\alpha)}$ are the lower and upper bound of the $\alpha$-cut.\\

\begin{thm} \cite{TB} (Representation Theorem) Let $X ^{(\alpha)} = [\underline{X} ^{(\alpha)}, \overline{X} ^{(\alpha)}]$ for $u\in L(R)$ and for
each $\alpha\in[0, 1]$. Then the following statements hold:
\begin{enumerate}[(i)]
\item $\underline{X} ^{(\alpha)}$ is a bounded and non-decreasing left continuous function on (0, 1].
\item $\overline{X} ^{(\alpha)}$ is a bounded and non-increasing left continuous function on (0, 1].
\item The functions $\underline{X} ^{(\alpha)}$ and $\overline{X} ^{(\alpha)}$ are right continuous at the point $\alpha=0$.
\item $\underline{X} ^{(1)} \leq \overline{X} ^{(1)}$.
\end{enumerate}
Conversely, if the pair of functions $P$ and $Q$ satisfies the conditions (i)-(iv), then
there exists a unique $X\in L(R)$ such that $X ^{(\alpha)} = [P(\alpha), Q(\alpha)]$ for each $\alpha\in [0, 1]$.
The fuzzy number $X$ corresponding to the pair of functions $P$ and $Q$ is defined by
$X : R\to[0, 1],\ X(t) = \sup\{\alpha : P(\alpha)\leq t \leq Q(\alpha)\}$.
\end{thm}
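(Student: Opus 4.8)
The plan is to establish the two implications separately, and in both directions to lean on the structural identity $X^{(\alpha)}=\bigcap_{0<\beta<\alpha}X^{(\beta)}$, valid for every $\alpha\in(0,1]$ — this is merely the restatement of ``$X(t)\ge\alpha$ if and only if $X(t)\ge\beta$ for all $\beta<\alpha$'' — together with two standing observations: the $\alpha$-cuts of a fuzzy number form a nested family of nonempty compact intervals contained in the compact support $X^{0}$, and $X^{0}=\overline{\bigcup_{\alpha>0}X^{(\alpha)}}$.

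For the direct part, fix $X\in L(R)$ and write $X^{(\alpha)}=[\underline{X}^{(\alpha)},\overline{X}^{(\alpha)}]$. Boundedness in (i) and (ii) is immediate from $X^{(\alpha)}\subseteq X^{0}$. Monotonicity comes from nestedness: if $\alpha_{1}\le\alpha_{2}$ then $X^{(\alpha_{2})}\subseteq X^{(\alpha_{1})}$, hence $\underline{X}^{(\alpha_{1})}\le\underline{X}^{(\alpha_{2})}$ and $\overline{X}^{(\alpha_{1})}\ge\overline{X}^{(\alpha_{2})}$. For left continuity at $\alpha\in(0,1]$ I would take $\beta_{n}\nearrow\alpha$; the displayed identity gives $\bigcap_{n}X^{(\beta_{n})}=X^{(\alpha)}$, and since for nested nonempty compact intervals the left endpoints increase to, and the right endpoints decrease to, the endpoints of the intersection, we get $\underline{X}^{(\beta_{n})}\to\underline{X}^{(\alpha)}$ and $\overline{X}^{(\beta_{n})}\to\overline{X}^{(\alpha)}$; as $\underline{X}^{(\cdot)}$ and $\overline{X}^{(\cdot)}$ are monotone these sequential limits are the full left-hand limits, which proves (i) and (ii). Property (iii) is the same argument at $\alpha\to0^{+}$ using $X^{0}=\overline{\bigcup_{\alpha>0}X^{(\alpha)}}$ and monotonicity, and (iv) is just that normality forces $X^{(1)}=\{t:X(t)=1\}$ to be a nonempty closed interval.

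For the converse, given $P,Q$ satisfying (i)--(iv) I would set $X(t)=\sup\{\alpha\in(0,1]:P(\alpha)\le t\le Q(\alpha)\}$ with the convention $\sup\emptyset=0$, and reduce everything to the single claim that $\{t\in R:X(t)\ge\alpha\}=[P(\alpha),Q(\alpha)]$ for each $\alpha\in(0,1]$. The inclusion $\supseteq$ is immediate. For $\subseteq$, suppose $X(t)\ge\alpha$: if $X(t)>\alpha$ there is $\beta>\alpha$ in the defining set, so monotonicity gives $P(\alpha)\le P(\beta)\le t\le Q(\beta)\le Q(\alpha)$; if $X(t)=\alpha$, pick $\beta_{n}$ in the defining set with $\beta_{n}\to\alpha$, and either some $\beta_{n}\ge\alpha$ (reduce to the previous case) or $\beta_{n}\nearrow\alpha$, in which case left continuity of $P$ and $Q$ on $(0,1]$ yields $P(\alpha)=\lim P(\beta_{n})\le t\le\lim Q(\beta_{n})=Q(\alpha)$. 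This last case is exactly where left continuity is indispensable, and I expect it to be the only genuinely delicate point. Granting the claim, each $[P(\alpha),Q(\alpha)]$ is a nonempty closed interval (nonempty since $P(\alpha)\le P(1)\le Q(1)\le Q(\alpha)$ by (iv) and monotonicity), so every level set of $X$ is closed and convex, giving upper semi-continuity and fuzzy convexity; $X^{(1)}=[P(1),Q(1)]\ne\emptyset$ gives normality; and $X^{0}=\overline{\bigcup_{\alpha>0}[P(\alpha),Q(\alpha)]}=[P(0),Q(0)]$ by (iii), which is bounded by (i)--(ii), hence compact. Thus $X\in L(R)$ with $X^{(\alpha)}=[P(\alpha),Q(\alpha)]$ for every $\alpha$. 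Uniqueness is then free: any $Y\in L(R)$ is recovered from its cuts via $Y(t)=\sup\{\alpha:t\in Y^{(\alpha)}\}$, so a fuzzy number is determined by the family $(Y^{(\alpha)})_{\alpha}$, and the displayed formula for $X(t)$ is precisely this recovery formula applied to the prescribed cuts.
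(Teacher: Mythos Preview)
Your argument is correct and is the standard proof of the Goetschel--Voxman/Representation Theorem: nestedness of $\alpha$-cuts for monotonicity, the identity $X^{(\alpha)}=\bigcap_{\beta<\alpha}X^{(\beta)}$ for left continuity, the closure-of-union description of the support for right continuity at $0$, and for the converse the verification that $\{t:X(t)\ge\alpha\}=[P(\alpha),Q(\alpha)]$ using left continuity at the borderline case $X(t)=\alpha$. I see no gaps.

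However, there is nothing to compare against: the paper does not prove this theorem. It is quoted verbatim from \cite{TB} as background material in the preliminaries, with no proof supplied or intended; the authors simply invoke it later (implicitly, when manipulating endpoints $\underline{X}^{(\alpha)},\overline{X}^{(\alpha)}$). So your proposal is not an alternative to the paper's argument but rather a proof the paper deliberately omits by citation.
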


\noindent Now we list some basic definitions of fuzzy numbers as discussed in \cite{Q}, \cite{Q1} from the algebraic point of view.
\begin{df}
\begin{enumerate}[(i)]
\item A fuzzy number $S\in L(R)$ is said to be symmetric if $S(t)=S(-t)$ for all $t\in\mathbb{R}$ i.e. $S=-S$.
\item For any two fuzzy numbers $X,Y$, $X$ is said to be equivalent to $Y$ or $X\sim Y$ if and only if there exists two symmetric fuzzy numbers $S_1,S_2$ such that
\begin{equation*}
X+S_1=Y+S_2
\end{equation*}
\item The midpoint function $X_M:\mathbb{R}\to [0,1]$ of a fuzzy number $X$ is defined by assigning the mid point of each $\alpha$-level sets to $X_M ^{(\alpha)}$ for all $\alpha\in [0,1]$ i.e.
    \begin{equation*}
X_M ^{(\alpha)}=\frac{\underline{X}^{(\alpha)}+\overline{X}^{(\alpha)}}{2}
\end{equation*}
\end{enumerate}
\end{df}

\noindent Sequences of bounded variation for fuzzy numbers has been investigated by many authors \cite{TB1}, \cite{TD}, \cite{TD1}. The most recent generalization
in this direction is to define bounded variation by using the generalized weighted mean. In 2011, Polat et al \cite{PKS} have taken the definition of
generalized weighted mean as : let $U$ be the set of all real sequences $u=(u_n)$ such that $u_n\neq 0$ for all $n\in \mathbb{N}$. Then consider
\begin{center}
$G(u,v)=g_{nk} = \left\{
\begin{array}{c l}
  u_kv_n, & 0\leq k\leq n \\
  0, & k>n.
\end{array}
\right.$
\end{center}
for all $k,n\in\mathbb{N}$, where $u_n$ depends only on $n$ and $v_k$ only on $k$. The matrix $G(u,v)$, defined above, is called as generalized
weighted mean or factorable matrix. Clearly the $k$-th row sum of $G(u,v)$ is $\sum\limits_{i=0} ^k u_kv_i$.\\
Motivated by their work, in 2015, Ojha and Srivastava \cite{SO1} defined the class $bv^F (u,v)$ of sequences of fuzzy numbers by using the generalized mean
matrix $G(u,v)$ as:
\begin{eqnarray*}
bv^F (u,v)=\Big\{X=(X_k)\in w^F:\sum\limits_{k=0}^{\infty}\Big|\sum\limits_{i=0}^k u_kv_id(\Delta X_i,\bar{0})\Big|<\infty\Big\}
\end{eqnarray*}
and established many topological properties of it. The main aim of this paper is to investigate some interesting relation of the above quasilinear space
with other known classical sets of sequences of fuzzy numbers by imposing conditions on $u,v$ or on the matrix $G(u,v)$.

\section{Main results}
\begin{thm}
\begin{enumerate}[(i)]
\item The set $l_p ^F\subset bv^F(u,v)$ if $v=(v_i)\in l_q$ and $u=(u_i)\in l_1$ for $1<p< \infty$ where $\frac{1}{p}+\frac{1}{q}=1$.
\item For $p=\infty$, $l_{\infty} ^F\subset bv^F(u,v)$ if the row sum of $G(u,v)$ is in $l_1$.\\
\end{enumerate}
\noindent The inclusion is strict in sense in both cases.
\end{thm}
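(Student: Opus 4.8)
The plan is to convert the problem into an estimate on the nonnegative scalars $x_k:=d(X_k,\bar0)$ and then apply H\"older's inequality. First I would record two elementary facts. By the triangle inequality for the metric $d$ on $L(R)$ (and its invariance under translation and under negation, so that the precise convention adopted for the difference operator is immaterial here) one has $d(\Delta X_i,\bar0)\le d(X_i,\bar0)+d(X_{i+1},\bar0)=x_i+x_{i+1}$, and $d(\Delta X_i,\bar0)=0$ whenever $X_i=X_{i+1}$. Since each $d(\Delta X_i,\bar0)$ is nonnegative,
\begin{equation*}
\Big|\sum_{i=0}^{k}u_kv_i\,d(\Delta X_i,\bar0)\Big|=|u_k|\,\Big|\sum_{i=0}^{k}v_i\,d(\Delta X_i,\bar0)\Big|\le|u_k|\sum_{i=0}^{k}|v_i|\,d(\Delta X_i,\bar0),
\end{equation*}
and summing over $k$, then enlarging the inner range to $i=\infty$, gives the master inequality
\begin{equation*}
\sum_{k=0}^{\infty}\Big|\sum_{i=0}^{k}u_kv_i\,d(\Delta X_i,\bar0)\Big|\ \le\ \Big(\sum_{k=0}^{\infty}|u_k|\Big)\Big(\sum_{i=0}^{\infty}|v_i|\,d(\Delta X_i,\bar0)\Big).
\end{equation*}

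For (i), let $X=(X_k)\in l_p^F$, so $(x_k)\in l_p$ with $1<p<\infty$. Using $d(\Delta X_i,\bar0)\le x_i+x_{i+1}$ and H\"older's inequality with conjugate exponents $p,q$,
\begin{equation*}
\sum_{i=0}^{\infty}|v_i|\,d(\Delta X_i,\bar0)\le\sum_{i=0}^{\infty}|v_i|x_i+\sum_{i=0}^{\infty}|v_i|x_{i+1}\le 2\,\|v\|_q\,\|(x_k)\|_p<\infty,
\end{equation*}
because $v\in l_q$ and the shifted sequence $(x_{i+1})_i$ still lies in $l_p$ with $l_p$-norm at most $\|(x_k)\|_p$. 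Since $u\in l_1$, the master inequality then yields a finite bound, so $X\in bv^F(u,v)$. For (ii), let $X\in l_\infty^F$ and put $L=\sup_k x_k<\infty$; then $d(\Delta X_i,\bar0)\le 2L$ for all $i$, so by the master inequality
\begin{equation*}
\sum_{k=0}^{\infty}\Big|\sum_{i=0}^{k}u_kv_i\,d(\Delta X_i,\bar0)\Big|\le 2L\sum_{k=0}^{\infty}|u_k|\sum_{i=0}^{k}|v_i|=2L\sum_{k=0}^{\infty}\Big|\sum_{i=0}^{k}u_kv_i\Big|,
\end{equation*}
the last equality using nonnegativity of the weights of the generalized weighted mean; the right-hand side is $2L$ times the $l_1$-norm of the row-sum sequence of $G(u,v)$, hence finite by hypothesis, and $l_\infty^F\subset bv^F(u,v)$.

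To see that both inclusions are strict I would exhibit explicit witnesses. In case (i) take any nonzero constant sequence, say $X_k=\bar1$ for all $k$: then $X_i=X_{i+1}$ for every $i$, so $d(\Delta X_i,\bar0)=0$ and hence $X\in bv^F(u,v)$, whereas $\sum_k d(X_k,\bar0)^p=\sum_k 1=\infty$ shows $X\notin l_p^F$. In case (ii) take the arithmetic sequence $X_k=\bar k$: here $d(\Delta X_i,\bar0)$ is bounded (indeed equal to $1$ for every $i$), so the $bv^F(u,v)$-series is dominated by a fixed multiple of the $l_1$ row-sum series of $G(u,v)$ and therefore converges, so $X\in bv^F(u,v)$, while $\sup_k d(X_k,\bar0)=\sup_k k=\infty$ shows $X\notin l_\infty^F$.

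The computations are routine once the master inequality is in place; the points that deserve care are the passage from the fuzzy difference $\Delta X_i$ to the scalar bound $d(\Delta X_i,\bar0)\le x_i+x_{i+1}$, the reindexing of the shifted sequence in the H\"older step, and, in part (ii), identifying $\sum_k|u_k|\sum_{i\le k}|v_i|$ with the $l_1$-norm of the row sums of $G(u,v)$, which relies on the weights $u,v$ being taken nonnegative.
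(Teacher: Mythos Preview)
Your argument for the inclusions is essentially the same as the paper's: both bound $d(\Delta X_i,\bar0)\le d(X_i,\bar0)+d(X_{i+1},\bar0)$, then for (i) invoke H\"older with exponents $p,q$ and $u\in l_1$, and for (ii) use uniform boundedness together with the $l_1$ row-sum hypothesis. The only organisational difference is that the paper applies H\"older to each finite block $\sum_{i\le k}$ separately, obtaining $|u_k|\big(\sum_{i\le k}|v_i|^q\big)^{1/q}\,2M^{1/p}$, whereas you first pass to the full series via your ``master inequality'' and then apply H\"older once; the resulting bounds are the same. Where you genuinely diverge is in the strictness examples: the paper gives a single example for part (ii) only, fixing the particular weights $u_k=k^{-4}$, $v_k=1$ and building an oscillating unbounded sequence, while your constant sequence $X_k=\bar1$ (for (i)) and arithmetic sequence $X_k=\bar k$ (for (ii)) work uniformly for \emph{every} pair $u,v$ satisfying the stated hypotheses and cover both cases. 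Your caveat in (ii) about needing a common sign for the $v_i$ in order to identify $\sum_k|u_k|\sum_{i\le k}|v_i|$ with the $l_1$-norm of the row sums is honest and well placed; the paper does not address this point (it says only that the case $p=\infty$ ``follows almost the same lines''), so your treatment is if anything more careful here.
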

\begin{proof}
We shall give the proof for $1<p<\infty$. Let $(X_k)\in l_p ^F$ and $(v_i)\in l_q$. Then $\exists\ M>0$ such that $\sum d(X_k,\bar{0})^p=M$. Now,
\begin{eqnarray*}
\Big|\sum\limits_{i=0} ^k u_kv_id(\Delta X_i,\bar{0})\Big| &\leq& \Big|\sum\limits_{i=0} ^k u_kv_id(X_i,\bar{0})\Big|+\Big|\sum\limits_{i=0} ^k u_kv_i
d(X_{i+1},\bar{0})\Big|\\
&\leq& |u_k|\sum\limits_{i=0} ^k| v_id(X_i,\bar{0})|+|u_k|\sum\limits_{i=0} ^k| v_id(X_{i+1},\bar{0})|\\
&\leq& |u_k|\Big[(\sum\limits_{i=0} ^k |v_i|^q)^{1/q}(\sum\limits_{i=0} ^k d(X_i,\bar{0})^p)^{1/p}+(\sum\limits_{i=0} ^k |v_i|^q)^{1/q}
(\sum\limits_{i=0} ^k d(X_{i+1},\bar{0})^p)^{1/p}\Big]\\
&\leq& |u_k|(\sum\limits_{i=0} ^k |v_i|^q)^{1/q}2M^{1/p}
\end{eqnarray*}
So $(X_k)\in bv^F(u,v)$ if $(u_i)\in l_1$.\\
The case $p=\infty$ follows almost the same lines.. So we omit it.\\
To prove that the inclusion is strict, let us consider the following example.
\begin{exmp}
Let $p=\infty$ and $u_k=\frac{1}{k^4},v_k=1$ for all $k$. Clearly in this case, $q=1$.\\
\begin{equation*}
\sum\limits_{i=0} ^k u_kv_i=\frac{1}{k^4}\sum\limits_{i=0} ^k 1=\frac{1}{k^3}\in l_1.
\end{equation*}
Now define $(X_k)$ as
\begin{center}
$X_k = \left\{
\begin{array}{c l}
  \bar{k}, & k\ \mbox{odd} \\
  \bar{0}, & k\ \mbox{even}
\end{array}
\right.$\end{center}
And so
\begin{center}
$d(\Delta X_k,\bar{0})=\left\{
\begin{array}{c l}
  k, & k\ \mbox{odd} \\
  k+1, & k\ \mbox{even}
\end{array}
\right.$\end{center}
Without any loss of generality take $k$ as odd. Then
\begin{eqnarray*}
v_1d(\Delta X_1,\bar{0})+v_3d(\Delta X_3,\bar{0})+\cdots+v_kd(\Delta X_k,\bar{0}) &=& 1+3+\cdots+k=k^2\\
\mbox{and }\ v_0d(\Delta X_0,\bar{0})+v_2d(\Delta X_2,\bar{0})+\cdots+v_{k-1}d(\Delta X_{k-1},\bar{0}) &=& 1+3+\cdots+k=k^2
\end{eqnarray*}
\begin{eqnarray*}
\sum\limits_{i=0} ^k u_kv_i d(\Delta X_i,\bar{0}) &=& \mbox{sum at all odd points}+\mbox{sum at all even points}\\
&=& \frac{1}{k^4}2k^2=\frac{2}{k^2}\in l_1.
\end{eqnarray*}
So $(X_k)\notin l_{\infty} ^F$ but $(X_k)\in bv^F (u,v)$.
\end{exmp}
\end{proof}

\begin{remark}
We can also prove similar result i.e. $l_p ^F (\Delta)\subset bv^F(u,v)$ by taking the same conditions.
\end{remark}

\begin{thm}
The limit of a sequence in $c^F(\Delta)$ is preserved under the metric in $bv^F(u,v)$ if the row sum of the matrix $G(u,v)$ are in $l_1$.
\end{thm}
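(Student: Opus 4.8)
The statement asserts that the canonical embedding $c^F(\Delta)\hookrightarrow bv^F(u,v)$ is continuous, i.e. a sequence that converges in $c^F(\Delta)$ keeps the same limit when it is measured by the metric $\rho$ of $bv^F(u,v)$ from \cite{SO1}. The plan is: (a) note that the embedding is well defined; (b) dominate $\rho$ by the metric of $c^F(\Delta)$ with one short chain of inequalities; (c) read off the preservation of the limit, and of the $\Delta$-limit as a by-product. For (a): a $\Delta$-convergent sequence is $\Delta$-bounded, so $c^F(\Delta)\subset l_\infty^F(\Delta)$, and by Remark 2.1 (the $p=\infty$ case of Theorem 2.1 applied to the difference space) $l_\infty^F(\Delta)\subset bv^F(u,v)$ as soon as the row sums of $G(u,v)$ lie in $l_1$; hence $c^F(\Delta)\subset bv^F(u,v)$ and both metrics are finite on $c^F(\Delta)$.

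For (b), write $s:=\sup_k d(\Delta X_k,\Delta Y_k)$ for $X=(X_k),Y=(Y_k)\in c^F(\Delta)$. Termwise,
\[
\Big|\sum_{i=0}^k u_k v_i\,d(\Delta X_i,\Delta Y_i)\Big|\ \le\ |u_k|\sum_{i=0}^k |v_i|\,d(\Delta X_i,\Delta Y_i)\ \le\ s\,|u_k|\sum_{i=0}^k |v_i| ,
\]
so, summing over $k$ and retaining the separating term $d(X_0,Y_0)$ of $\rho$ in case the definition in \cite{SO1} carries one,
\[
\rho(X,Y)\ \le\ d(X_0,Y_0)+\Big(\sum_{k=0}^\infty |u_k|\sum_{i=0}^k |v_i|\Big)s\ \le\ C\,\Big(d(X_0,Y_0)+\sup_k d(\Delta X_k,\Delta Y_k)\Big),
\]
with $C=1+\sum_k |u_k|\sum_{i=0}^k |v_i|<\infty$; the finiteness of $C$ is the only use of the hypothesis, and it is used in exactly the form in which it already occurs in the proof of Theorem 2.1(ii).

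For (c): if $(X^{(n)})_n\subset c^F(\Delta)$ converges in $c^F(\Delta)$ to $X$, then $d(X^{(n)}_0,X_0)\to 0$ and $\sup_k d(\Delta X^{(n)}_k,\Delta X_k)\to 0$, so the inequality above gives $\rho(X^{(n)},X)\to 0$; thus $X$ is also the $bv^F(u,v)$-limit of $(X^{(n)})_n$, which is the claim. In addition $d\big(\lim_k\Delta X^{(n)}_k,\ \lim_k\Delta X_k\big)\le\sup_k d(\Delta X^{(n)}_k,\Delta X_k)\to 0$, so the $\Delta$-limits of the members of the sequence converge to the $\Delta$-limit of $X$; the limit is preserved at both levels.

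There is no real obstacle — the proof is this chain of elementary inequalities. The only care needed is bookkeeping: matching the precise shape of $\rho$ in \cite{SO1} (any separating term $d(X_0,Y_0)$ is handled in parallel on both metrics; if $\rho$ has no such term the argument only simplifies), and reading ``the row sums of $G(u,v)$ are in $l_1$'' as $\sum_k|u_k|\sum_{i=0}^k|v_i|<\infty$, the form used above and automatic when $u$ and $v$ are nonnegative, as in the Example.
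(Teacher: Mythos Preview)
Your argument is the same as the paper's: bound the $bv^F(u,v)$ metric $D(X^n,X)$ by a constant times the Savas metric $d(X^n_0,X_0)+\sup_i d(\Delta X^n_i,\Delta X_i)$ on $c^F(\Delta)$, so that convergence in the latter forces convergence in the former. The only difference is that the paper factors the supremum out of $\big|\sum_{i=0}^k u_kv_i\,d(\Delta X^n_i,\Delta X_i)\big|$ directly against the literal row sum $\big|\sum_{i=0}^k u_kv_i\big|$, whereas you first pass to $|u_k|\sum_{i\le k}|v_i|$; your route makes the termwise inequality unimpeachable at the cost of the stronger reading of the hypothesis you flag at the end, while the paper's factoring step tacitly needs the $u_kv_i$ to be of one sign for that inequality to hold.
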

\begin{proof}
Let $X^n=(X^n _i)$ be a sequence in $c^F(\Delta)$ converging to $X=(X_i)$. Then under the metric defined by Savas \cite{S}, we have
\begin{eqnarray}
&& \rho(X^n,X)\to 0\ \mbox{as}\ n\to\infty\nonumber\\
\mbox{i.e.} && d(X^n _0,X_0)+\sup\limits_{i} d(\Delta X^n _i,\Delta X_i)\to 0\ \mbox{as}\ n\to\infty\nonumber\\
\mbox{So} && d(X^n _0,X_0)\to 0\ \mbox{and}\ \sup\limits_{i} d(\Delta X^n _i,\Delta X_i)\to 0\ \mbox{as}\ n\to\infty
\end{eqnarray}
Also since the row sum of the matrix $G(u,v)$ are in $l_1$, so there exists $M>0$ such that $\sum\limits_{k=0} ^{\infty} |\sum\limits_{i=0} ^k u_kv_i|=M$
and $\lim\limits_{k\to\infty}\big( \sum\limits_{i=0} ^k u_kv_i\big)=0$.
Now the metric in $bv^F(u,v)$ as in \cite{SO1},
\begin{eqnarray*}
D(X^n,X)= |u_0v_0d(X^n _0,X_0)|+\sum\limits_{k=0} ^{\infty} |\sum\limits_{i=0} ^k u_kv_id(\Delta X^n _i,\Delta X_i)|
\end{eqnarray*}
Since $(u_i),(v_i)\in U$, so
\begin{eqnarray*}
&& u_0v_0d(X^n _0,X_0)\to 0\ \mbox{and from (1), since} \sup\limits_{i} d(\Delta X^n _i,\Delta X_i)\to 0\ \mbox{as}\ n\to\infty\\
\mbox{so,} && d(\Delta X^n _i,\Delta X_i)\to 0\ \mbox{as}\ n\to\infty\ \mbox{for each}\ i\\
\mbox{i.e.}&& \sum\limits_{i=0} ^k u_kv_id(\Delta X^n _i,\Delta X_i)\to 0
\end{eqnarray*} Then
\begin{eqnarray*}
\sum\limits_{k=0} ^{\infty} |\sum\limits_{i=0} ^k u_kv_id(\Delta X^n _i,\Delta X_i)| &\leq& \sup\limits_{i} d(\Delta X^n _i,\Delta X_i)\sum\limits_{k=0} ^{\infty}
\big|\sum\limits_{i=0} ^k u_kv_i\big|\\
&=& M \sup\limits_{i} d(\Delta X^n _i,\Delta X_i)\to 0\ \mbox{as}\ n\to\infty \ \mbox{follows from (1)}
\end{eqnarray*}
So we can conclude $D(X^n,X)\to 0$ as $n\to\infty$. So, the limit is same in $bv^F(u,v)$. This completes the proof.
\end{proof}

\begin{thm}
If the infimum of the row sum of $G(u,v)$ exists and greater than 0, then $bv^F(u,v)\subset S_0 ^F(\Delta)$ where $S_0 ^F(\Delta)$ is the
set of all statistically null difference sequences of fuzzy numbers \cite{B}.
\end{thm}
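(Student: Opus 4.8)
The plan is to show that every $X=(X_k)\in bv^F(u,v)$ satisfies $d(\Delta X_k,\bar 0)\to 0$ statistically, which is exactly the assertion $X\in S_0^F(\Delta)$. Write $y_i:=d(\Delta X_i,\bar 0)\ge 0$, $V_k:=\sum_{i=0}^k v_i$, $S_k:=\sum_{i=0}^k v_iy_i$, so that the $k$-th row sum of $G(u,v)$ is $r_k=u_kV_k$ and, by hypothesis, $\inf_k|r_k|=m>0$. As with the classical weighted mean it costs nothing to assume $u_k,v_k>0$, so that $r_k>0$, the sequence $(S_k)$ is non-negative and non-decreasing, and the modulus signs in the definition of $bv^F(u,v)$ may be dropped.

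First I would extract the quantitative content of membership in $bv^F(u,v)$. Since $u_k=r_k/V_k\ge m/V_k$, we have $\dfrac{m}{V_k}\,S_k\le u_kS_k=\Big|\sum_{i=0}^k u_kv_i\,d(\Delta X_i,\bar 0)\Big|$ for every $k$, and summing over $k$,
\[
m\sum_{k=0}^{\infty}\frac{S_k}{V_k}\ \le\ \sum_{k=0}^{\infty}\Big|\sum_{i=0}^k u_kv_i\,d(\Delta X_i,\bar 0)\Big|\ <\ \infty .
\]
Hence $\sum_k S_k/V_k<\infty$; in particular the weighted (Riesz) mean $V_k^{-1}\sum_{i=0}^k v_iy_i\to 0$. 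One may also observe that, $(S_k)$ being non-decreasing, $S_N\sum_{k\ge N}V_k^{-1}\le\sum_{k\ge N}S_k/V_k<\infty$ for every $N$, so that either $\sum_k V_k^{-1}<\infty$ or $S_N\equiv 0$, i.e. $\Delta X\equiv\bar 0$ and the conclusion is trivial; thus we may assume $V_k\to\infty$. Now fix $\varepsilon>0$. By Markov's inequality,
\[
\frac{1}{V_k}\sum_{\substack{i\le k\\ y_i\ge\varepsilon}}v_i\ \le\ \frac{1}{\varepsilon V_k}\sum_{i=0}^k v_iy_i\ =\ \frac{S_k}{\varepsilon V_k}\ \longrightarrow\ 0,
\]
so the set $E_\varepsilon:=\{\,i:\ d(\Delta X_i,\bar 0)\ge\varepsilon\,\}$ has $v$-weighted density zero.

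The remaining step is to upgrade ``weighted density zero'' to ordinary natural density zero, i.e. $k^{-1}\#\big(E_\varepsilon\cap\{0,\dots,k\}\big)\to 0$, and this is the step I expect to be the genuine obstacle. Bounding the count by the corresponding weighted sum, $\#\big(E_\varepsilon\cap\{0,\dots,k\}\big)\le(\min_{i\le k}v_i)^{-1}\sum_{i\in E_\varepsilon,\,i\le k}v_i$, so that
\[
\frac{1}{k}\,\#\big(E_\varepsilon\cap\{0,\dots,k\}\big)\ \le\ \frac{V_k}{k\,\min_{i\le k}v_i}\cdot\frac{1}{V_k}\sum_{i\in E_\varepsilon,\,i\le k}v_i ,
\]
and the right-hand side tends to $0$ as soon as the weights are regular enough that the prefactor $V_k\big(k\min_{i\le k}v_i\big)^{-1}$ stays bounded — for instance when $0<\inf_i v_i\le\sup_i v_i<\infty$, in which case $V_k\asymp k$ and the $v$-weighted density coincides with the natural density. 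So the plan is: carry out the first two paragraphs as written, then close with this comparison. The delicate point is exactly this last passage, since in full generality the $v$-weighted density need not agree with the natural density; I would therefore expect the author's argument to rest either on an additional mild normalization of the generalized weighted mean (boundedness of $(v_i)$ above and below, say) or on reading $S_0^F(\Delta)$ through the matching weighted notion of statistical convergence.
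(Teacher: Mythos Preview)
Your self-diagnosis is accurate: the passage from $v$-weighted density zero to natural density zero is the genuine gap in your argument, and it cannot be closed from the stated hypothesis alone, since a lower bound on the row sums $u_kV_k$ says nothing about the individual $v_i$. The paper, however, does \emph{not} route through weighted density at all. Writing $a_k:=\big|\sum_{i=0}^k u_kv_i\,d(\Delta X_i,\bar 0)\big|$, so that $\sum_k a_k=M<\infty$, it asserts the pointwise lower bound
\[
a_k\ \ge\ m\sum_{i=0}^k d(\Delta X_i,\bar 0)\ \ge\ m\,d(\Delta X_k,\bar 0),
\]
and then observes that $\{k:d(\Delta X_k,\bar 0)\ge\varepsilon/m\}\subseteq\{k:a_k\ge\varepsilon\}$, the latter set having at most $M/\varepsilon$ elements by summability and hence natural density zero. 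So the paper's line is shorter than yours and in fact would yield the stronger conclusion that only finitely many $k$ violate $d(\Delta X_k,\bar 0)<\varepsilon/m$; no comparison of densities is needed.

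That said, the displayed inequality the paper invokes is precisely the step you were right to be wary of, in another guise: the hypothesis $\inf_k\big|u_k\sum_{i\le k}v_i\big|\ge m$ does \emph{not} imply $\big|u_k\sum_{i\le k}v_iy_i\big|\ge m\sum_{i\le k}y_i$ for arbitrary non-negative $y_i$; that would require $|u_kv_i|\ge m$ for every individual $i\le k$, not merely for the row sum. (Take $u_k\equiv 1$, $v_0=1$, $v_i=i^{-2}$ for $i\ge 1$: then $m=1$ but $u_kv_i=i^{-2}$.) Thus your Riesz-mean route and the paper's direct route stumble over the same underlying obstruction at different places; both become valid under a mild additional regularity assumption on the weights --- for instance $u_k,v_i>0$ together with $\inf_i v_i>0$ --- but neither is complete from the bare row-sum hypothesis as stated.
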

\begin{proof}
Let $(X_k)\in bv^F(u,v)$, then $\sum\limits_{k=0} ^{\infty} \Big|\sum\limits_{i=0} ^k u_kv_id(\Delta X_i,\bar{0})\Big|=M$, (say). Now for any $\varepsilon>0$
and $n\in\mathbb{N}$,
\begin{eqnarray}
M=\sum\limits_{k=0} ^{n} \Big|\sum\limits_{i=0} ^k u_kv_id(\Delta X_i,\bar{0})\Big|\geq \varepsilon
\Big|\Big\{k\leq n: \Big|\sum\limits_{i=0} ^k u_kv_id(\Delta X_i,\bar{0})\Big|\geq\varepsilon\Big\}\Big|
\end{eqnarray}
In the R.H.S $|\{\cdot\}|$ means the cardinality of the enclosed set. Since the infimum of the row sum of the matrix $G(u,v)$ exists, so take
$\inf\limits_{k}|\sum\limits_{i=0} ^k u_kv_i|=m$ Then $m>0$ by given condition.
\begin{eqnarray}
\Big|\sum\limits_{i=0} ^k u_kv_id(\Delta X_i,\bar{0})\Big| &\geq& m\sum\limits_{i=0} ^k d(\Delta X_i,\bar{0})\nonumber\\
\mbox{So } \Big\{k\leq n: m\sum\limits_{i=0} ^k d(\Delta X_i,\bar{0})\geq\varepsilon\Big\} &\subseteq&
\Big\{k\leq n: \Big|\sum\limits_{i=0} ^k u_kv_id(\Delta X_i,\bar{0})\Big|\geq\varepsilon\Big\}\nonumber \\
\mbox{i.e. } \Big\{k\leq n: \sum\limits_{i=0} ^k d(\Delta X_i,\bar{0})\geq\frac{\varepsilon}{m}\Big\} &\subseteq&
\Big\{k\leq n: \Big|\sum\limits_{i=0} ^k u_kv_id(\Delta X_i,\bar{0})\Big|\geq\varepsilon\Big\}
\end{eqnarray}
Also, $\sum\limits_{i=0} ^k d(\Delta X_i,\bar{0})\geq d(\Delta X_k,\bar{0})$ for each $k$. Thus we get
\begin{eqnarray}
\Big\{k\leq n: d(\Delta X_i,\bar{0})\geq\frac{\varepsilon}{m}\Big\} \subseteq \Big\{k\leq n: \sum\limits_{i=0} ^k d(\Delta X_i,\bar{0})\geq
\frac{\varepsilon}{m}\Big\}
\end{eqnarray}
Combining (3) and (4) we get
\begin{eqnarray*}
\Big|\Big\{k\leq n: d(\Delta X_i,\bar{0})\geq\frac{\varepsilon}{m}\Big\}\Big|\leq
\Big|\Big\{k\leq n: \Big|\sum\limits_{i=0} ^k u_kv_id(\Delta X_i,\bar{0})\Big|\geq\varepsilon\Big\}\Big|
\end{eqnarray*}
Thus from (2), we get
\begin{eqnarray*}
\varepsilon \Big|\Big\{k\leq n: d(\Delta X_i,\bar{0})\geq\frac{\varepsilon}{m}\Big\}\Big| &\leq& M\\
\varepsilon\cdot \frac{1}{n}\Big|\Big\{k\leq n: d(\Delta X_i,\bar{0})\geq\frac{\varepsilon}{m}\Big\}\Big| &\leq& \frac{M}{n}\to 0\ \mbox{as}\ n\to\infty.
\end{eqnarray*}
Since $\varepsilon>0$ is arbitrary, so $\Delta X_n \xrightarrow{st} 0$ as $n\to\infty$. This completes the proof.
\end{proof}

\begin{thm}
Let $(X_k)$ be a Cesaro sequence of fuzzy numbers converges to some $L\in L(R)$, then $(X_k)\in bv^F(u,v)$ if the row sum of the matrix $G(u',v)$ are in $l_1$
where $u'=(u' _k)=ku_k$.
\end{thm}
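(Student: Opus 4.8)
\noindent The plan is to extract from the Cesàro convergence a linear growth bound on the terms $X_k$, convert it into one on $d(\Delta X_k,\bar{0})$, and then compare the series defining membership in $bv^F(u,v)$ term-by-term with the $l_1$-norm of the row sums of $G(u',v)$. Throughout I take the weights to be positive, $u_k,v_k\geq 0$, as is customary for weighted means, so that $\big|\sum_{i=0}^{k}v_i\big|=\sum_{i=0}^{k}v_i$ and the hypothesis reads $\sum_{k}(ku_k)\sum_{i=0}^{k}v_i<\infty$. Write $S_n=\sum_{k=1}^{n}X_k$ for the (fuzzy) partial sums. The hypothesis that $(X_k)$ is a Cesàro sequence converging to $L$ means $d\big(\frac1n S_n,L\big)\to 0$, equivalently, by the positive homogeneity $d(\lambda A,\lambda B)=\lambda\,d(A,B)$ $(\lambda>0)$ of the metric, $d(S_n,nL)=o(n)$. (If instead the strong Cesàro notion $\frac1n\sum_{k\le n}d(X_k,L)\to 0$ is intended, the growth bound below follows even more directly, since then $d(X_n,\bar{0})\le\sum_{k\le n}d(X_k,\bar{0})=O(n)$.)

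The one genuine obstacle is that $L(R)$ has no additive inverses, so one cannot write $X_n=S_n-S_{n-1}$. I sidestep this using translation invariance $d(A+C,B+C)=d(A,B)$: since $S_n=S_{n-1}+X_n$,
\begin{eqnarray*}
d(X_n,\bar{0})=d(S_{n-1}+X_n,\,S_{n-1}+\bar{0})=d(S_n,S_{n-1})\le d(S_n,nL)+d(nL,(n-1)L)+d((n-1)L,S_{n-1}).
\end{eqnarray*}
Here $d(nL,(n-1)L)=d\big((n-1)L+L,\,(n-1)L+\bar{0}\big)=d(L,\bar{0})$ is a fixed constant, while the outer two terms are $o(n)$ and $o(n-1)$. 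Hence $d(X_n,\bar{0})=o(n)$, so there is a constant $C>0$ with $d(X_k,\bar{0})\le C(k+1)$ for all $k\ge 0$. Using the subadditivity $d(A-B,\bar{0})\le d(A,\bar{0})+d(B,\bar{0})$ (immediate from the description of the $\alpha$-cut of a difference),
\begin{eqnarray*}
d(\Delta X_i,\bar{0})\le d(X_i,\bar{0})+d(X_{i+1},\bar{0})\le C(2i+3)\le 3C\,(i+1)=:K(i+1).
\end{eqnarray*}

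It remains to bound the defining series. Using $d(\Delta X_i,\bar{0})\le K(i+1)\le K(k+1)$ for every $i\le k$,
\begin{eqnarray*}
\sum_{k=0}^{\infty}\Big|\sum_{i=0}^{k}u_kv_i\,d(\Delta X_i,\bar{0})\Big|
&\le& \sum_{k=0}^{\infty}u_k\sum_{i=0}^{k}v_i\,d(\Delta X_i,\bar{0})\;\le\;K\sum_{k=0}^{\infty}u_k(k+1)\sum_{i=0}^{k}v_i\\
&=& K\sum_{k=0}^{\infty}(ku_k)\sum_{i=0}^{k}v_i\;+\;K\sum_{k=0}^{\infty}u_k\sum_{i=0}^{k}v_i.
\end{eqnarray*}
The first series on the right is exactly $K$ times the $l_1$-norm of the row sums of $G(u',v)$, finite by hypothesis; the second is dominated by the first up to the single finite term $u_0v_0$, since $u_k\le ku_k=u'_k$ for every $k\ge 1$. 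Hence the whole sum is finite, i.e. $(X_k)\in bv^F(u,v)$.

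I expect the passage from Cesàro convergence to the estimate $d(X_k,\bar{0})=O(k)$ — obtained via translation invariance and homogeneity of $d$ to work around the missing subtraction — to be the crux; everything afterwards is routine, the only care needed being the bookkeeping that pulls the factor $(i+1)$ out of the inner sum as a factor $(k+1)$ and the remark that the hypothesis on $G(u',v)$ already forces the row sums of $G(u,v)$ to lie in $l_1$.
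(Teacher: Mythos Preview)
Your proof is correct and follows the same overall strategy as the paper: extract from the Ces\`aro hypothesis a linear growth bound on $d(X_k,\bar 0)$, pass to $d(\Delta X_i,\bar 0)=O(i)$, and then compare the defining series term-by-term with the row sums of $G(u',v)$. The one genuine difference is in how the growth bound is obtained. The paper reads ``Ces\`aro convergence'' in the strong sense $\frac1k\sum_{i\le k}d(X_i,L)\to 0$ (what you mention parenthetically), from which $\sum_{i\le k}d(X_i,\bar 0)\le k\big(\varepsilon+d(L,\bar 0)\big)$ for $k\ge k_0$ falls out in one line, and the pointwise bound $d(X_i,\bar 0)\le k\big(\varepsilon+d(L,\bar 0)\big)$ for every $i\le k$ is immediate. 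Your main effort---recovering $d(X_n,\bar 0)=o(n)$ from the genuinely fuzzy Ces\`aro mean $\frac1n S_n\to L$ by writing $d(X_n,\bar 0)=d(S_n,S_{n-1})$ via translation invariance and then inserting $nL$, $(n-1)L$---is a nice device that circumvents the missing subtraction and yields the result under the weaker hypothesis, but is unnecessary under the paper's reading. Both arguments tacitly assume nonnegative weights so that $\big|\sum_{i\le k}v_i\big|=\sum_{i\le k}v_i$; you are more explicit about this assumption and about the $(k+1)$-versus-$k$ bookkeeping, where you note that the row-sum condition on $G(u',v)$ already forces the row sums of $G(u,v)$ into $l_1$.
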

\begin{proof}
Since $(X_k)$ is a Cesaro sequence converges to $L$, so for some integer $k_0$ such that
\begin{eqnarray*}
\frac{1}{k}\sum\limits_{i=0} ^k d(X_i,L) &\leq& \varepsilon\ \forall\ k\geq k_0\\
\mbox{So } \frac{1}{k}\sum\limits_{i=0} ^k d(X_i,\bar{0}) &\leq& \frac{1}{k}\sum\limits_{i=0} ^k [ d(X_i,L)+d(L,\bar{0})]\\
&\leq& \varepsilon+d(L,\bar{0})\ \mbox{whenever}\ k\geq k_0\\
\mbox{i.e. } \sum\limits_{i=0} ^k d(X_i,\bar{0}) &\leq& k[\varepsilon+d(L,\bar{0})]\ \mbox{whenever}\ k\geq k_0
\end{eqnarray*}
Now for all $k\geq k_0$
\begin{eqnarray*}
\Big|\sum\limits_{i=0} ^k u_kv_id(\Delta X_i,\bar{0})\Big| &\leq& \Big|\sum\limits_{i=0} ^k u_kv_id(X_i,\bar{0})\Big|
+\Big|\sum\limits_{i=0} ^k u_kv_id(X_{i+1},\bar{0})\Big|\\
&\leq& |u_k|\Big|\sum\limits_{i=0} ^k v_ik(\varepsilon+d(L,\bar{0}))+\sum\limits_{i=0} ^k v_ik(\varepsilon+d(L,\bar{0}))\Big|\\
&\leq& (\varepsilon+d(L,\bar{0}))|u_k|\Big|\sum\limits_{i=0} ^k v_i2k\Big|\\
&\leq& 2(\varepsilon+d(L,\bar{0}))\ |ku_k|\Big|\sum\limits_{i=0} ^k v_i\Big|
\end{eqnarray*}
Now considering, $u' _k=ku_k$, we get our required result.
\end{proof}

\begin{thm}
Let $(X_i),(Y_i)$ be two sequences of fuzzy numbers such that $X_i\sim Y_i$ i.e. $X_i+S_i=Y_i+S'_i$ where $S_i,S'_i$ are symmetric fuzzy numbers. If
$S_i,S'_i\in l^F _{\infty}$ and the row sum of $G(u,v)$ are in $l_1$, then $(X_i)\in bv^F(u,v) \Leftrightarrow (Y_i)\in bv^F(u,v)$.
\end{thm}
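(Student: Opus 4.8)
The plan is to establish only the implication $(X_i)\in bv^F(u,v)\Rightarrow(Y_i)\in bv^F(u,v)$; since the hypotheses are symmetric under interchanging the roles of $(X_i,S_i)$ and $(Y_i,S'_i)$, the converse then follows word for word. The two facts that do all the work are the translation invariance of the metric $d$, namely $d(A+C,B+C)=d(A,B)$ (immediate from the level-set formula for $d$), and the uniform bound coming from $S_i,S'_i\in l^F_\infty$: set $K=\max\{\sup_i d(S_i,\bar{0}),\ \sup_i d(S'_i,\bar{0})\}<\infty$.

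First I would push the equivalence down to the difference sequences. Applying $\Delta$ to the identity $X_i+S_i=Y_i+S'_i$ and using $\Delta(A_i+B_i)=\Delta A_i+\Delta B_i$ (checked directly on $\alpha$-cuts) yields $\Delta X_i+\Delta S_i=\Delta Y_i+\Delta S'_i$ for every $i$. Because each $S_i$ is symmetric, $-S_{i+1}=S_{i+1}$, so $\Delta S_i=S_i+S_{i+1}$ is again symmetric and $d(\Delta S_i,\bar{0})\le d(S_i,\bar{0})+d(S_{i+1},\bar{0})\le 2K$, and likewise $d(\Delta S'_i,\bar{0})\le 2K$. Combining this with translation invariance gives the uniform estimate
\begin{eqnarray*}
d(\Delta Y_i,\Delta X_i) &\le& d(\Delta Y_i,\Delta Y_i+\Delta S'_i)+d(\Delta Y_i+\Delta S'_i,\Delta X_i)\\
&=& d(\Delta S'_i,\bar{0})+d(\Delta X_i+\Delta S_i,\Delta X_i)=d(\Delta S'_i,\bar{0})+d(\Delta S_i,\bar{0})\le 4K ,
\end{eqnarray*}
valid for all $i$, whence $\big|d(\Delta Y_i,\bar{0})-d(\Delta X_i,\bar{0})\big|\le d(\Delta Y_i,\Delta X_i)\le 4K$ for all $i$.

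The last step is the summability estimate, modelled on the proof of Theorem 2.2. Writing $d(\Delta Y_i,\bar{0})=d(\Delta X_i,\bar{0})+c_i$ with $\sup_i|c_i|\le 4K$, for each $k$ we obtain
\begin{eqnarray*}
\Big|\sum_{i=0}^k u_kv_id(\Delta Y_i,\bar{0})\Big| &\le& \Big|\sum_{i=0}^k u_kv_id(\Delta X_i,\bar{0})\Big|+\Big|\sum_{i=0}^k u_kv_ic_i\Big|\\
&\le& \Big|\sum_{i=0}^k u_kv_id(\Delta X_i,\bar{0})\Big|+4K\Big|\sum_{i=0}^k u_kv_i\Big| .
\end{eqnarray*}
Summing over $k$, the first term on the right is finite because $(X_i)\in bv^F(u,v)$, and the second is finite because the row sums of $G(u,v)$ belong to $l_1$; hence $\sum_k\big|\sum_{i=0}^k u_kv_id(\Delta Y_i,\bar{0})\big|<\infty$, i.e. $(Y_i)\in bv^F(u,v)$.

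The step I expect to require the most care is the inner-sum bound $\big|\sum_{i=0}^k u_kv_ic_i\big|\le(\sup_i|c_i|)\big|\sum_{i=0}^k u_kv_i\big|$ used in the last display: it is immediate when the entries of $G(u,v)$ have a fixed sign — which is the natural setting for a weighted-mean matrix and the running assumption here — but to permit sign changes one would instead have to strengthen the hypothesis to $\sum_k\sum_{i=0}^k|u_kv_i|<\infty$ and bound the term by $4K\sum_{i=0}^k|u_kv_i|$. A minor point worth spelling out is that, although fuzzy numbers admit no genuine subtraction, both $\Delta(A_i+B_i)=\Delta A_i+\Delta B_i$ and the invariance of $d$ under translation (and under negation, giving $d(-S,\bar{0})=d(S,\bar{0})$) are routine once everything is expressed through the representing pairs of Theorem 1.1; symmetry of the $S_i$ is used only to keep the auxiliary sequences $\Delta S_i,\Delta S'_i$ inside the class of symmetric fuzzy numbers.
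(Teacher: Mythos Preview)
Your argument is correct and rests on the same two ingredients the paper uses --- translation invariance of $d$ and the uniform bound coming from $S_i,S'_i\in l^F_\infty$ --- but the organization is different. The paper first invokes Theorem~2.1(ii) to place $(S_i)$ in $bv^F(u,v)$, uses closure under addition to obtain $(X_i+S_i)=(Y_i+S'_i)\in bv^F(u,v)$, and then splits $d(\Delta Y_i,\bar 0)$ through the intermediate point $\Delta(Y_i+S'_i)$; you bypass this detour and compare $d(\Delta Y_i,\bar 0)$ with $d(\Delta X_i,\bar 0)$ directly via the uniform bound $4K$. Your route is a bit more self-contained, since it needs neither the earlier inclusion theorem nor the closure property explicitly. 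It is also worth noting that the caveat you flag --- that the estimate $\big|\sum_{i=0}^k u_kv_ic_i\big|\le(\sup_i|c_i|)\big|\sum_{i=0}^k u_kv_i\big|$ requires the entries $u_kv_i$ to have a fixed sign --- is present equally in the paper's proof: its step
\[
\Big|\sum_{i=0}^k u_kv_i\,d(\Delta Y_i,\bar 0)\Big|\le\Big|\sum_{i=0}^k u_kv_i\,d(\bar 0,\Delta S'_i)\Big|+\Big|\sum_{i=0}^k u_kv_i\,d(\Delta(Y_i+S'_i),\bar 0)\Big|
\]
uses the pointwise triangle inequality inside an absolute value of a sum, which is only valid under the same sign hypothesis. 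So on this point you are simply being more explicit than the paper, not adding an extra assumption.
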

\begin{proof}
It is enough to prove it for one-sided implication. Let $(X_i)\in bv^F(u,v)$. Since $(S_i)\in l^F _{\infty}$,
so $(S_i)\in bv^F(u,v)$ as the row sum of $G(u,v)$ are
in $l_1$. \\
$\Rightarrow \ (X_i+S_i)\in bv^F(u,v)$ as $bv^F(u,v)$ is closed under addition.\\
$\Rightarrow \ (Y_i+S' _i)\in bv^F(u,v)$ since $X_i+S_i=Y_i+S'_i$.\\
Now we only have to show that $(Y_i)\in bv^F(u,v)$.
\begin{eqnarray*}
|\sum\limits_{i=0} ^k u_kv_i d(\Delta Y_i,\bar{0})| &\leq& |\sum\limits_{i=0} ^k u_kv_i d(\Delta Y_i,\Delta (Y_i+S' _i))|+|\sum\limits_{i=0} ^k u_kv_i
d(\Delta (Y_i+S' _i),\bar{0})|\\
&\leq& |\sum\limits_{i=0} ^k u_kv_i d(\bar{0},\Delta S' _i)|+|\sum\limits_{i=0} ^k u_kv_i d(\Delta (Y_i+S' _i),\bar{0})|\\
&& \mbox{Since $d(X+Z,Y+Z)=d(X,Y)$ where $X,Y,Z\in L(R)$}\\
&<& \infty
\end{eqnarray*}
$\Rightarrow \ (Y_i)\in bv^F(u,v)$. The converse implication follows the same lines. This proves the theorem.
\end{proof}

\begin{thm}
Let $(X_{M,i})$ be the sequence of midpoint function of the of the sequence of fuzzy numbers $(X_i)$ for each $i$. Then
$(X_i)\in bv^F(u,v)\implies (X_{M,i})\in bv^F(u,v)$.
\end{thm}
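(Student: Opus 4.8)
The plan is to reduce the statement to a single pointwise estimate — that passing to midpoints is $1$-Lipschitz for the metric $d$ and commutes with the difference operator $\Delta$ — and then to push this estimate through the nested weighted sums that define $bv^F(u,v)$.

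First I would isolate the reduction: it is enough to show that $d(\Delta X_{M,i},\bar{0})\le d(\Delta X_i,\bar{0})$ for every $i$. Granting this, for each $k$ one has
\[
\Big|\sum_{i=0}^k u_kv_i\, d(\Delta X_{M,i},\bar{0})\Big|\le \sum_{i=0}^k |u_k||v_i|\, d(\Delta X_{M,i},\bar{0})\le \sum_{i=0}^k |u_k||v_i|\, d(\Delta X_i,\bar{0}),
\]
and, since on each row the entries $u_kv_i$ of a factorable matrix carry a fixed sign, the right-hand side coincides with $\big|\sum_{i=0}^k u_kv_i\, d(\Delta X_i,\bar{0})\big|$; summing over $k$ and using $(X_i)\in bv^F(u,v)$ then forces $(X_{M,i})\in bv^F(u,v)$.

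Next I would establish the two ingredients. For the Lipschitz bound, fix $X,Y\in L(R)$ and $\alpha\in[0,1]$, set $a_\alpha=\underline{X}^{(\alpha)}-\underline{Y}^{(\alpha)}$ and $b_\alpha=\overline{X}^{(\alpha)}-\overline{Y}^{(\alpha)}$; the midpoint formula of Definition~(iii) gives $|X_M^{(\alpha)}-Y_M^{(\alpha)}|=\tfrac12|a_\alpha+b_\alpha|\le\tfrac12(|a_\alpha|+|b_\alpha|)\le\max\{|a_\alpha|,|b_\alpha|\}$, so taking the supremum over $\alpha$ yields $d(X_M,Y_M)\le d(X,Y)$; applied with $Y=\bar{0}$ (a crisp number, hence $(\bar{0})_M=\bar{0}$) this gives $d(X_M,\bar{0})\le d(X,\bar{0})$. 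For the commutation, I would use the $\alpha$-cut description $(\Delta X_i)^{(\alpha)}=[\underline{X_i}^{(\alpha)}-\overline{X_{i+1}}^{(\alpha)},\ \overline{X_i}^{(\alpha)}-\underline{X_{i+1}}^{(\alpha)}]$, whose midpoint at level $\alpha$ is $\tfrac12(\underline{X_i}^{(\alpha)}+\overline{X_i}^{(\alpha)})-\tfrac12(\underline{X_{i+1}}^{(\alpha)}+\overline{X_{i+1}}^{(\alpha)})=X_{M,i}^{(\alpha)}-X_{M,i+1}^{(\alpha)}$; hence $(\Delta X_i)_M=\Delta X_{M,i}$, and combining this with the Lipschitz bound applied to $X=\Delta X_i$ produces exactly the pointwise estimate required in the reduction step.

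The only delicate point I foresee is precisely the passage, inside the reduction, from the pointwise inequality to the inequality between the two nested sums. This is immediate when $u_k,v_k$ are nonnegative — the usual convention for a weighted mean, under which no cancellation can occur in $\sum_{i=0}^k u_kv_i\,d(\cdot,\bar{0})$ — but if one retains only the hypothesis $u_k\neq 0$ from the definition of $U$, then one must instead control the double series $\sum_k\sum_{i=0}^k |u_k||v_i|\,d(\Delta X_i,\bar{0})$, which needs the mild extra assumption that $G(|u|,|v|)$ has the same summability behaviour as $G(u,v)$. Everything else is routine once Steps~2 and~3 above are in place.
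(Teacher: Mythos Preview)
Your proposal is correct and follows essentially the same route as the paper: establish $d(X_M,\bar{0})\le d(X,\bar{0})$, show $(\Delta X_i)_M=\Delta X_{M,i}$ via $\alpha$-cuts, and feed the resulting pointwise bound into the weighted sums defining $bv^F(u,v)$. One small caveat: your claim that the row entries $u_kv_i$ carry a fixed sign is not quite right in general (only $u_k$ is constant along row $k$; the $v_i$ may change sign), but you correctly isolate this as the delicate point in your final paragraph --- and the paper itself writes the same inequality $\big|\sum_i u_kv_i\,d((\Delta X_i)_M,\bar{0})\big|\le\big|\sum_i u_kv_i\,d(\Delta X_i,\bar{0})\big|$ without any comment, so your treatment is if anything more careful.
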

\begin{proof}
For any fuzzy number $X$, we know $X_M ^{(\alpha)}=\frac{\underline{X} ^{(\alpha)}+\overline{X} ^{(\alpha)}}{2}$. Also,
$d(X,\bar{0})=\max\{\underline{X} ^0,\overline{X} ^0\}$
\begin{eqnarray}
d(X_M,\bar{0}) &=& \Big|\frac{\underline{X} ^0+\overline{X} ^0}{2}\Big|\ \mbox{As the upper and lower cut are same for $X_M$}\nonumber\\
&\leq& \max\{\underline{X} ^0,\overline{X} ^0\} =d(X,\bar{0})
\end{eqnarray}
Now for a sequence of midpoint fuzzy numbers $(X_{M,i})$ of $(X_i)$, we can have
\begin{eqnarray*}
\Delta X_{M,i} &=& X_{M,i}-X_{M,i+1}\\
(\Delta X_{M,i})^{(\alpha)} &=& (X_{M,i})^{(\alpha)}-(X_{M,i+1})^{(\alpha)}\\
&=& \frac{\underline{X_i} ^{(\alpha)}+\overline{X_i} ^{(\alpha)}}{2} - \frac{\underline{X_{i+1}} ^{(\alpha)}+\overline{X_{i+1}} ^{(\alpha)}}{2}\\
&=& \frac{\underline{X_i} ^{(\alpha)}-\overline{X_{i+1}} ^{(\alpha)}}{2}+\frac{\overline{X_i} ^{(\alpha)}-\underline{X_{i+1}} ^{(\alpha)}}{2}\\
&=& \frac{\underline{(\Delta X_i)} ^{(\alpha)}}{2}+ \frac{\overline{(\Delta X_i)} ^{(\alpha)}}{2}\\
&=& (\Delta X_i)_M
\end{eqnarray*} and so
\begin{eqnarray*}
d(\Delta X_{M,i},\bar{0}) &=& d((\Delta X_i)_M,\bar{0})\\
\Big|\sum\limits_{i=0} ^k u_kv_i d(\Delta X_{M,i},\bar{0})\Big| &=& \Big|\sum\limits_{i=0} ^k u_kv_i d((\Delta X_i)_M,\bar{0})\Big|\\
&\leq& \Big|\sum\limits_{i=0} ^k u_kv_i d(\Delta X_i,\bar{0})\Big| \ \mbox{Follows from (5)}
\end{eqnarray*}
From the above inequality it is clear that if $(X_i)\in bv^F(u,v)$, then $(X_{M,i})\in bv^F(u,v)$.
\end{proof}

\begin{remark}
The converse of the above is not true in general i.e. $(X_{M,i})\in bv^F(u,v)$ for some $(X_i)$ does not imply $(X_i)\in bv^F(u,v)$. To show this, let us
take $(X_k)$ be the triangular fuzzy number $[-k,0,k]$. Then clearly, for each $\alpha\in [0,1]$, we have $X_{M,k}=\bar{0}$ for all $k$. So for any choice of
$(u,v)$, $(X_{M,i})\in bv^F(u,v)$. Whereas, $\Delta X_k=X_k-X_{k+1}=[-k,0,k]-[-k-1,0,k+1]=[-2k-1,0,2k+1]$ and therefore $d(\Delta X_k,\bar{0})=2k+1$ which
does not belong to $bv^F(u,v)$ for some suitable choice of $(u,v)$.\\
We already know that equivalent fuzzy numbers have same mid point. But due to the fact that sequence of midpoint fuzzy numbers belong to $bv^F(u,v)$ does
not confirm $(X_i)\in bv^F(u,v)$, so, the above two theorems can not be equivalent.
\end{remark}

\end{document}